\documentclass[a4paper, 11pt]{article}
\usepackage{amsmath, amssymb,amscd, amsthm}
\usepackage[mathscr]{eucal}
\usepackage{graphics}
\usepackage{fullpage}
\usepackage{url}
\newcommand\cyr{%
 \renewcommand\rmdefault{wncyr}%
 \renewcommand\sfdefault{wncyss}%
 \renewcommand\encodingdefault{OT2}%
\normalfont\selectfont} \DeclareTextFontCommand{\textcyr}{\cyr}

\newtheorem{theorem}{Theorem}
\newtheorem{lemma}[theorem]{Lemma}

\newtheorem{claim}[theorem]{Claim}

\long\def\symbolfootnote[#1]#2{\begingroup%
\def\thefootnote{\fnsymbol{footnote}}\footnote[#1]{#2}\endgroup}

\newtheorem*{thm1}{Theorem 1}
\newtheorem*{thm2}{Theorem 2}

\title{Endpoint restriction estimates for the paraboloid over finite fields}

\author{Allison Lewko \thanks{Supported by a National Defense Science and Engineering Graduate Fellowship} \and Mark Lewko}

\date{}

\begin{document}

\maketitle

\begin{abstract} We prove certain endpoint restriction estimates for the paraboloid over finite fields in three and higher dimensions. Working in the bilinear setting, we are able to pass from estimates for characteristic functions to estimates for general functions while avoiding the extra logarithmic power of the field size which is introduced by the dyadic pigeonhole approach. This allows us to remove logarithmic factors from the estimates obtained by Mockenhaupt and Tao in three dimensions and those obtained by Iosevich and Koh in higher dimensions.
\end{abstract}

\symbolfootnote[0]{2000 Mathematics Subject Classification. 42B10}

\section{Introduction}
Let $S$ denote a hypersurface in $\mathbb{R}^{n}$ with measure $d\sigma$. The restriction problem for $S$ is to determine for which pairs of $(p,q)$ does there exist an inequality of the form
\begin{equation}\label{restrictionIneqaulity}
||\hat{f}||_{L^{p'}(S,d\sigma)} \leq C ||f||_{L^{q'}(\mathbb{R}^n)}.
\end{equation}
We note that the left-hand side is not necessarily well-defined since we have restricted the function $\hat{f}$ to the hypersurface $S$, a set of measure zero in  $\mathbb{R}^{n}$. However, if we can establish this inequality for all Schwartz functions $f$, then the operator that restricts $\hat{f}$ to $S$ can be defined whenever $f \in L^{q}$. In the Euclidean setting, the restriction problem has been extensively studied for many surfaces. In particular, it has been observed that restriction estimates are intimately connected to questions about certain partial differential equations as well as problems in geometric measure theory such as the Kakeya conjecture. The restriction conjecture states sufficient conditions on $(p,q)$ for the above inequality to hold. In the case of the sphere and paraboloid, the question is open in dimensions three and higher. For a survey of restriction results in the Euclidean setting, see \cite{T}.

In \cite{MT}, Mockenhaupt and Tao initiated the study of the restriction phenomena in the finite field setting. This is motivated by both the similarities and the differences between the Euclidean and finite field settings, which suggest that studying restriction phenomena in finite fields may yield insights which are portable to the Euclidean setting, but also that the problems in the finite field setting present unique and independently interesting challenges. In addition, these problems in the finite field setting are closely related to other areas of mathematics, and particularly seem amenable to the use of combinatorial techniques.

We now introduce some notation to formally define the problem. We let $F$ denote a finite field of characteristic $p >2$. We let $S^{1}$ denote the unit circle in $\mathbb{C}$ and define $e: F \rightarrow S^1$ to be a non-principal character of $F$. For example, when $F = \mathbb{Z}/p \mathbb{Z}$, we can set $e(x) := e^{2\pi i x/p}$. We will be considering the vector space $F^n$ and its dual space $F_*^n$. Following the conventions of \cite{MT}, we think of $F^n$ as endowed with the counting measure $dx$ which assigns mass 1 to each point and $F_*^n$ as endowed with the normalized counting measure $d\xi$ which assigns mass $|F|^{-n}$ to each point (where $|F|$ denotes the size of $F$, so the total mass is equal to 1 here). To be clear in our calculations, we will always include the appropriate powers of $|F|$ explicitly.

For a complex-valued function $f$ on $F^n$, we define its Fourier transform $\hat{f}$ on $F_*^n$ by:
\[\hat{f}(\xi) := \sum_{x \in F^n} f(x) e(-x \cdot \xi).\]
For a complex-valued function $g$ on $F_*^n$, we define its inverse Fourier transform $g^{\vee}$ on $F^n$ by:
\[g^{\vee}(x) := \frac{1}{|F|^n} \sum_{\xi \in F_*^n} g(\xi) e(x\cdot \xi).\]
It is easy to verify that $(\hat{f})^\vee = f$ and $\widehat{(g^{\vee})} = g$.

We define the paraboloid $\mathcal{P} \subset F_*^n$ as: $\mathcal{P} := \{(\gamma, \gamma \cdot \gamma): \gamma \in F_*^{n-1}\}$. This is endowed with the normalized ``surface measure" $d\sigma$ which assigns mass $|\mathcal{P}|^{-1}$ to each point in $\mathcal{P}$. We note that $|\mathcal{P}| = |F|^{n-1}$.
For a function $f: \mathcal{P} \rightarrow \mathbb{C}$, we define the function $(f d\sigma)^\vee: F^n \rightarrow \mathbb{C}$ as follows:
\[(f d\sigma)^\vee (x) := \frac{1}{|\mathcal{P}|} \sum_{\xi \in \mathcal{P}} f(\xi) e(x \cdot \xi).\]

For a complex-valued function $f$ on $F^n$ and $q \in [1, \infty)$, we define \\$||f||_{L^q(F^n, dx)} := \left( \sum_{x \in F^n} |f(x)|^q \right)^{\frac{1}{q}}$. For a complex-valued function $f$ on $\mathcal{P}$, we similarly define $||f||_{L^q(\mathcal{P},d\sigma)} := \left( \frac{1}{|\mathcal{P}|} \sum_{\xi \in \mathcal{P}} |f(\xi)|^q \right)^{\frac{1}{q}}$. (These are the standard definitions of the $L^q$ norms, and hence they satisfy the usual properties of norms.)

Now we define a restriction inequality to be an inequality of the form

\begin{equation}\label{restrictionIneqaulityFF}
||\hat{f}||_{L^{p'}(S,d\sigma)} \leq \mathcal{R}(p\rightarrow q) ||f||_{L^{q'}(F^n)},
\end{equation}
where $\mathcal{R}(p\rightarrow q)$ denotes the best constant such that the above inequality holds. Here $p'$ and $q'$ denote the conjugate exponents of $p$ and $q$ respectively (i.e. $\frac{1}{p} + \frac{1}{p'} = 1$). By duality, this is equivalent to the following extension estimate:

\begin{equation}\label{extentionIneqaulityFF}
||(f d\sigma)^\vee||_{L^q(F^n, dx)} \leq \mathcal{R}(p\rightarrow q) ||f||_{L^p(S,d\sigma)}.
\end{equation}

We will only be considering the case of $S = \mathcal{P}$. We will use the notation $X \ll Y$ to denote that quantity $X$ is at most a constant times quantity $Y$, where this constant may depend on the dimension $n$ but not on the field size, $|F|$. For a finite field $F$, the constant $\mathcal{R}(p\rightarrow q)$ will always be finite. The restriction problem in this setting is to determine for which $(p,q)$ can we upper bound $\mathcal{R}(p\rightarrow q)$ independently of $|F|$ (i.e. for which $(p,q)$ does $\mathcal{R}(p \rightarrow q) \ll 1$ hold).

Mockenhaupt and Tao \cite{MT} solved this problem for the paraboloid in two dimensions.  In three dimensions, we require $-1$ not be a square in $F$. For such $F$, they showed that $\mathcal{R}(8/5+\epsilon \rightarrow 4) \ll 1$ and $\mathcal{R}(2 \rightarrow \frac{18}{5}+\epsilon) \ll 1$ for every $\epsilon>0$. When $\epsilon=0$, their bounds were polylogarithmic in $|F|$. Mockenhaupt and Tao's argument for the $\mathcal{R}(8/5 \rightarrow 4)$ estimate proceeded by first establishing the estimate for characteristic functions. Here one can expand the $L^4$ norm and reduce the problem to combinatorial estimates. A well-known dyadic pigeonhole argument then allows one to pass back to general functions at the expense of a logarithmic power of $|F|$. The work of Iosevich and Koh in \cite{IK}, \cite{IKs}, and \cite{IKq} follows the same approach: first proving restriction estimates for characteristic functions in the finite field setting, and then incurring an extra logarithmic power of $|F|$ in the general estimates obtained through the dyadic pigeonhole argument.

We introduce a method for obtaining general estimates which avoids the polylogarithmic cost of the dyadic pigeonhole technique. Our argument begins by rewriting the $L^4$ norm as $||(fd\sigma)^{\vee}||_{L^4}=||(fd\sigma)^{\vee}(fd\sigma)^{\vee}||_{L^2}^{1/2}$. We then adapt the arguments of \cite{MT} and \cite{IK} to the bilinear variant $||(fd\sigma)^{\vee}(gd\sigma)^{\vee}||_{L^2}^{1/2}$ in the case that $f$ and $g$ are characteristic functions. The key point is that we allow $f$ and $g$ to be different characteristic functions - this is what makes our method more powerful than the standard dyadic pigeonhole technique.

To obtain estimates for arbitrary functions $f$, we can assume that $f$ is non-negative real-valued and decompose $f$ as a linear combination of characteristic functions, where the coefficients are negative powers of two (we can do this without loss of generality by adjusting only the constant of our bound). We can then employ the triangle inequality to upper bound $||(fd\sigma)^{\vee}||_{L^4}$ by a double sum of terms like $||(\chi_j d\sigma)^{\vee}(\chi_k d\sigma)^{\vee}||_{L^2}^{1/2}$, where $\chi_j$ and $\chi_k$ are characteristic functions, weighted by negative powers of two.
We then apply our bilinear estimate for characteristic functions to these inner terms and use standard bounds on sums to obtain the final estimates.

Our method yields the following theorems:

\begin{theorem}\label{thm:dim3} For the paraboloid in $3$ dimensions with $-1$ not a square, we have $\mathcal{R}(8/5 \rightarrow 4) \ll 1$ and $\mathcal{R}(2 \rightarrow \frac{18}{5}) \ll 1$.
\end{theorem}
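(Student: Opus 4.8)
The plan is to work with the extension inequality \eqref{extentionIneqaulityFF} and to exploit the identity $\|(fd\sigma)^{\vee}\|_{L^{4}(F^{n},dx)}^{2}=\|(fd\sigma)^{\vee}(fd\sigma)^{\vee}\|_{L^{2}(F^{n},dx)}$. As in the outline in the introduction, one may assume $f\geq 0$ with $\max_{\xi\in\mathcal P}f(\xi)=1$ and write $f=\sum_{j\geq 0}2^{-j}\chi_{E_{j}}$ exactly --- as a combination of characteristic functions of subsets $E_{j}\subseteq\mathcal P$ with dyadic coefficients (binary expansion of the values of $f$) --- all at the cost of an absolute constant; crucially this is an exact decomposition rather than a pigeonholing of scales. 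Expanding the square and applying the triangle inequality in $L^{2}$,
\[
\|(fd\sigma)^{\vee}\|_{L^{4}}^{2}\ \leq\ \sum_{j,k\geq 0}2^{-j-k}\,\bigl\|(\chi_{E_{j}}d\sigma)^{\vee}(\chi_{E_{k}}d\sigma)^{\vee}\bigr\|_{L^{2}},
\]
so the whole estimate reduces to a sharp bilinear bound for the extensions of characteristic functions of two \emph{possibly distinct} subsets of $\mathcal P$, followed by a summation over the dyadic levels $j,k$.

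For the bilinear bound, the orthogonality relation $\sum_{x\in F^{n}}e(x\cdot\zeta)=|F|^{n}\mathbf{1}_{\{\zeta=0\}}$ gives the exact identity
\[
\bigl\|(\chi_{E}d\sigma)^{\vee}(\chi_{G}d\sigma)^{\vee}\bigr\|_{L^{2}(F^{n},dx)}^{2}=\frac{|F|^{n}}{|\mathcal P|^{4}}\,Q(E,G),\qquad Q(E,G):=\#\bigl\{(\xi,\xi',\eta,\eta')\in E^{2}\times G^{2}:\ \xi+\eta=\xi'+\eta'\bigr\},
\]
with prefactor $|F|^{-5}$ for $n=3$. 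Parametrizing $\mathcal P$ by $F^{n-1}$ and writing $A,B\subseteq F^{n-1}$ for the projections of $E,G$, a quadruple counted by $Q(E,G)$ consists of a pair from $A$ and a pair from $B$ with the same vector sum and the same sum of squared norms; completing the square ($a=\tfrac s2+w$, $b=\tfrac s2-w$ for the common sum $s$) recasts the second condition as $|w|^{2}=|w'|^{2}$, i.e. the two difference vectors lie on a common level set of $\gamma\mapsto\gamma\cdot\gamma$. This is exactly where the hypothesis that $-1$ is not a square in $F$ is used: it makes $\gamma\mapsto\gamma\cdot\gamma$ anisotropic, so that for $n=3$ each nonzero level set is a nondegenerate ``circle'' of $|F|+1$ points and two distinct such circles meet in at most two points. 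With this rigidity I would run the Cauchy--Schwarz / circle-incidence argument of \cite{MT} (and \cite{IK} in higher dimensions), but now for two distinct sets, to obtain a bound of the shape
\[
Q(E,G)\ \ll\ |E|\,|G|\ +\ |F|^{-1}\bigl(|E|\,|G|\bigr)^{3/2};
\]
in particular $\|(\chi_{E}d\sigma)^{\vee}(\chi_{G}d\sigma)^{\vee}\|_{L^{2}}\ll\bigl(|F|^{-5/4}|E|^{1/2}+|F|^{-3/2}|E|^{3/4}\bigr)\bigl(|F|^{-5/4}|G|^{1/2}+|F|^{-3/2}|G|^{3/4}\bigr)$, which up to constants factors as a product of a weight in $E$ times a weight in $G$.

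It then remains to sum. Inserting the bilinear bound and using the product structure, the double sum collapses to the square of the single sum $\Sigma:=\sum_{j\geq 0}2^{-j}\bigl(|F|^{-5/4}|E_{j}|^{1/2}+|F|^{-3/2}|E_{j}|^{3/4}\bigr)$. Substituting $|E_{j}|=2^{8j/5}c_{j}$ with $c_{j}:=2^{-8j/5}|E_{j}|$, one has $\sum_{j}c_{j}\asymp|\mathcal P|\,\|f\|_{L^{8/5}(\mathcal P,d\sigma)}^{8/5}=:M$ and $M\geq c_{0}=|E_{0}|\geq 1$. The contribution of the $|E_{j}|^{1/2}$ terms is $|F|^{-5/4}\sum_{j}2^{-j/5}c_{j}^{1/2}\ll|F|^{-5/4}M^{1/2}$ by Cauchy--Schwarz and summation of the geometric series in $j$. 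The contribution of the $|E_{j}|^{3/4}$ terms is $|F|^{-3/2}\sum_{j}2^{j/5}c_{j}^{3/4}$; although the weight $2^{j/5}$ is now growing, splitting the range of $j$ at the value where $c_{j}$ and $2^{-8j/5}|F|^{2}$ balance --- and using $c_{j}\leq M$ together with H\"older on the low part and the a priori bound $|E_{j}|\leq|\mathcal P|=|F|^{2}$ on the high part --- yields $\sum_{j}2^{j/5}c_{j}^{3/4}\ll|F|^{1/4}M^{5/8}$, so this contribution is $\ll|F|^{-5/4}M^{5/8}$. Since $M^{1/2}\leq M^{5/8}$ for $M\geq 1$, this gives $\Sigma\ll|F|^{-5/4}M^{5/8}\asymp\|f\|_{L^{8/5}(\mathcal P,d\sigma)}$ and hence $\mathcal R(8/5\to 4)\ll 1$. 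No power of $\log|F|$ appears because the dyadic levels are never collapsed onto one scale: in the bilinear sum, levels $j$ and $k$ interact only through a product $|E_{j}|^{\alpha}|E_{k}|^{\alpha}$, and the ensuing sums converge geometrically with room to spare --- this is precisely the gain over the dyadic pigeonhole argument, and it relies on the freedom to take $E\neq G$.

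For $\mathcal R(2\to 18/5)\ll 1$ a natural route is the analogous variant: write $\|(fd\sigma)^{\vee}\|_{L^{18/5}}^{2}=\|((fd\sigma)^{\vee})^{2}\|_{L^{9/5}}$, decompose $f$ dyadically as above, apply the triangle inequality in $L^{9/5}$, and feed in a bilinear $L^{9/5}$ estimate obtained by interpolating the bilinear $L^{2}$ bound above against the elementary bound $\|(\chi_{E}d\sigma)^{\vee}(\chi_{G}d\sigma)^{\vee}\|_{L^{1}}\leq\|(\chi_{E}d\sigma)^{\vee}\|_{L^{2}}\|(\chi_{G}d\sigma)^{\vee}\|_{L^{2}}$ (for the most unbalanced levels one uses a sharper, asymmetric form of the bilinear count, valid when $|E|\ll|G|$); alternatively this estimate can be inserted into the $TT^{*}$/Stein--Tomas framework. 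In every case the crux --- and the only place the geometry of $\mathcal P$ and the hypothesis on $-1$ enter --- is the bilinear combinatorial estimate, which must be proved with exponents sharp enough that the level summation closes \emph{without} a $\log|F|$ loss; it is exactly the two-distinct-sets feature, together with the anisotropy of $\gamma\mapsto\gamma\cdot\gamma$, that makes this possible where the usual dyadic pigeonhole does not.
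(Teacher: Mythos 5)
Your overall architecture --- bilinearize the $L^4$ norm, decompose $f$ exactly into characteristic functions with dyadic coefficients, prove a bilinear estimate for two \emph{distinct} subsets of $\mathcal{P}$, and then sum over levels --- is precisely the paper's. The gap is in the bilinear estimate you feed into it. Writing $Q(E,G)=\#\{(\xi,\xi',\eta,\eta')\in E^2\times G^2:\xi+\eta=\xi'+\eta'\}$, you claim
\[
Q(E,G)\ \ll\ |E||G|\ +\ |F|^{-1}\bigl(|E||G|\bigr)^{3/2},
\]
asserting that it follows from the Cauchy--Schwarz/incidence argument of \cite{MT}. It does not, and it is false as stated. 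The incidence bound of Lemma \ref{lem:incidence} yields only the \emph{asymmetric} estimate
\[
Q(E,G)\ \ll\ \min\bigl(|E|^{1/2}|G|^{2}+|E||G|,\ |E||G|^{3/2}+|G|^{2}\bigr),
\]
which is what Lemma \ref{lem:bilinear} proves (via a Galilean transformation reducing the count to point--line incidences in $F^2$; the hypothesis that $-1$ is a nonsquare is used to make the lines $\ell(y)$ pairwise distinct, not through circle--circle intersections). Your bound is strictly stronger whenever $|E|,|G|<|F|^2$ and contains a gain of $|F|^{-1}$ that no Cauchy--Schwarz-level incidence argument over a finite field provides. It is also simply false under the theorem's hypotheses: take $|F|=q^3$ with $q$ a prime congruent to $3$ modulo $4$ (so $-1$ is a nonsquare in $F$) and let $E=G$ be the paraboloid over the subfield $\mathbb{F}_q$, so $|E|=q^2$; a direct count of representations gives $Q(E,E)\approx q^5$, whereas your bound predicts $O(q^4)$. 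Over prime fields the claimed bound would amount to an essentially optimal Szemer\'edi--Trotter-strength energy estimate, far beyond what is proved (or known).

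This is not a cosmetic issue, because your summation depends on it structurally: you use the (false) product factorization of the bilinear bound to collapse the double sum over $(j,k)$ into the square of a single sum $\Sigma$. With the correct asymmetric bound, the square-rooted terms are of the form $2^{-j-k}\bigl(|E_j|^{1/4}|E_k|+|E_j|^{1/2}|E_k|^{1/2}\bigr)$ for $k\le j$, which do not factor, and the actual content of the paper's summation is to exploit the asymmetry: the term $|E_j|^{1/4}|E_k|$ is controlled by using the pointwise bound $|E_j|\le 2^{8j/5}$ on one index and the normalization $\sum_k 2^{-8k/5}|E_k|=1$ on the other, while the symmetric term $|E_j|^{1/2}|E_k|^{1/2}$ uses the pointwise bound on both. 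So while the strategy is right, the estimate you rely on is unavailable, and the level summation as you wrote it does not close with the estimate that can actually be proved. (Your treatment of $\mathcal{R}(2\to 18/5)$ is only a sketch, but the paper likewise defers that implication to the machinery of \cite{MT}, so the essential gap is the one above.)
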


This improves upon Proposition 5.2 in \cite{MT} by removing the logarithmic power of $|F|$. While our argument is elementary and does not use multilinear interpolation, the proof is certainly in the spirit of real interpolation. The second estimate, $\mathcal{R}(2 \rightarrow \frac{18}{5}) \ll 1$, follows from the first using the machinery of \cite{MT} without modification. After discovering our proof, we learned that in unpublished work Bennett, Carbery, Garrigos, and Wright \cite{BCGW} have independently obtained the end-point results in the $3$-dimensional case. Their argument proceeds rather differently than ours and it is unclear if their argument can be extended to the higher dimensional settings. In higher dimensions, we prove:

\begin{theorem}\label{thm:RestrictionForHigher} For the paraboloid in $n$ dimensions when $n \geq 4$ is even or when $n$ is odd and $|F| = q^m$ for a prime $q$ congruent to 3 modulo 4 such that $m(n-1)$ is not a multiple of 4, we have $\mathcal{R}(\frac{4n}{3n-2} \rightarrow 4) \ll 1$ and $\mathcal{R}(2 \rightarrow \frac{2n^2}{n^2-2n+2}) \ll 1$.
\end{theorem}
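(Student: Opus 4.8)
The plan is to reduce the $L^4$-based estimate $\mathcal{R}(\frac{4n}{3n-2} \to 4) \ll 1$ to a bilinear estimate for characteristic functions, exactly in the spirit described in the introduction, and then to derive the $L^2$-based estimate $\mathcal{R}(2 \to \frac{2n^2}{n^2-2n+2}) \ll 1$ from it by the interpolation/duality machinery of Mockenhaupt--Tao, which (as the authors note for the three-dimensional case) carries over without modification. So the heart of the matter is the first estimate. Writing $\|(f d\sigma)^\vee\|_{L^4}^2 = \|(f d\sigma)^\vee (g d\sigma)^\vee\|_{L^2}$ with $g = f$, I would first establish the bilinear inequality
\[
\|(\chi_E d\sigma)^\vee (\chi_H d\sigma)^\vee\|_{L^2(F^n)} \ll |F|^{-\alpha} |E|^{\beta} |H|^{\beta}
\]
for characteristic functions of sets $E, H \subseteq \mathcal{P}$, with the exponents $\alpha, \beta$ dictated by the target $(p,q) = (\frac{4n}{3n-2}, 4)$: matching the scaling of the extension estimate forces $\beta = \frac{1}{p} \cdot \frac12 = \frac{3n-2}{8n}$ and a corresponding power of $|F|$. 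Expanding the $L^2$ norm via Plancherel, $\|(\chi_E d\sigma)^\vee(\chi_H d\sigma)^\vee\|_{L^2}^2$ becomes, up to normalization, a count of additive quadruples: the number of solutions to $\xi_1 + \eta_1 = \xi_2 + \eta_2$ with $\xi_i \in E$, $\eta_i \in H$ on the paraboloid. This is the standard $L^4$ computation of \cite{MT} and \cite{IK}, except that the two sets are allowed to differ; the key incidence/character-sum input — bounding the number of representations of a fixed vector as $\xi + \eta$ with $\xi, \eta \in \mathcal{P}$, which is where the hypothesis on $n$ and $|F|$ (ensuring the relevant quadratic form has the right number of isotropic directions, equivalently that certain Gauss sums do not conspire) enters — is symmetric in the two points and so applies verbatim. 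I would simply track constants through the Iosevich--Koh argument to get the clean bilinear bound with no logarithm.

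Having the bilinear estimate for characteristic functions, the second step is the ``poor man's real interpolation'' passage to general $f$. Normalizing $\|f\|_{L^p(\mathcal{P}, d\sigma)} = 1$ and assuming $f \geq 0$, decompose $f = \sum_{j \geq 0} 2^{-j} \chi_{E_j}$ where $E_j = \{\xi \in \mathcal{P} : 2^{-j-1} < f(\xi)/\|f\|_\infty \leq 2^{-j}\}$ (rescaling so $\|f\|_\infty$ is absorbed; one may truncate the tail at $j \sim \log|F|$ and absorb the negligible remainder into the constant, which is legitimate since we only need $\ll 1$). Then by the triangle inequality
\[
\|(f d\sigma)^\vee\|_{L^4}^2 \leq \sum_{j,k} 2^{-j} 2^{-k} \, \|(\chi_{E_j} d\sigma)^\vee (\chi_{E_k} d\sigma)^\vee\|_{L^2},
\]
and inserting the bilinear bound gives a double sum $\ll |F|^{-2\alpha} \sum_{j,k} 2^{-j} 2^{-k} |E_j|^{\beta} |E_k|^{\beta} = |F|^{-2\alpha}\big(\sum_j 2^{-j} |E_j|^{\beta}\big)^2$, which factors as the square of a single sum. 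It therefore suffices to show $\sum_j 2^{-j} |E_j|^\beta \ll |F|^{\alpha} \|f\|_{L^p(\mathcal{P},d\sigma)}$ (with $\beta = \frac{1}{2p}$). The normalization constraint reads $\sum_j 2^{-jp} |E_j| \ll |F|^{n-1}$ (since $\|f\|_{L^p}^p = |\mathcal{P}|^{-1} \sum_\xi |f(\xi)|^p$ and $|\mathcal{P}| = |F|^{n-1}$); the desired bound then follows from Hölder's inequality applied to the sequences $(2^{-j})_j$ and $(2^{-j(p-1)} |E_j|)_j$, or more directly from the inequality $\sum_j a_j^{\theta} b_j^{1-\theta} \le (\sum a_j)^\theta (\sum b_j)^{1-\theta}$ applied with the weights $2^{-j}$ chosen so that the exponents match up — a convexity argument. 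The crucial structural point, and the reason this beats the naive pigeonhole, is that the double sum collapses to a product of one-dimensional sums precisely because the bilinear estimate permits $E_j \neq E_k$; a diagonal-only ($L^4$ for characteristic functions) input would force a single dyadic level and lose the logarithm.

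The main obstacle I anticipate is \emph{not} the bookkeeping in the second step, which is elementary convexity, but rather confirming that the underlying combinatorial/character-sum estimate of Iosevich--Koh in fact holds \emph{bilinearly with a clean constant} under the stated arithmetic hypotheses — i.e. that the bound on $|\{(\xi,\eta) \in \mathcal{P}^2 : \xi + \eta = v\}|$ and the associated exponential-sum estimates for $|(\chi_E d\sigma)^\vee|$ do not secretly rely on $E = H$ or on a dyadic-level normalization, and that the exponent $\frac{4n}{3n-2}$ is exactly the one for which the resulting $|F|$-powers balance so that no logarithm survives. In particular I would need to check the parity/quadratic-residue conditions on $n$ and $|F| = q^m$ carefully: these govern whether the quadratic form $|x|^2$ on $F^{n-1}$ represents $0$ non-trivially and the precise value of the Gauss sum $\sum_{x \in F^{n-1}} e(t|x|^2)$, and hence the size of the Fourier transform of $d\sigma$ off the origin — the single place where the hypotheses of the theorem are actually used. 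Once that input is in hand with explicit constants, the rest is the mechanical assembly described above.
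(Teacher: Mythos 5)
There is a genuine gap, and it sits exactly at the point you dismiss as ``elementary convexity.'' Your plan rests on a \emph{symmetric} product-form bilinear bound $\|(\chi_E d\sigma)^\vee(\chi_H d\sigma)^\vee\|_{L^2}\ll |F|^{-\alpha}|E|^{\beta}|H|^{\beta}$, so that the double sum factors as $\bigl(\sum_j 2^{-j}|E_j|^{\beta}\bigr)^2$. Consistency with the diagonal case $E=H$ forces total degree $2/p$ split evenly, and then the single sum you must control is $\sum_j a_j^{1/p}$ with $a_j=2^{-jp}|E_j|$ and $\sum_j a_j=1$. The inequality you invoke goes the wrong way: for $p>1$ one has $\sum_j a_j^{1/p}\geq\bigl(\sum_j a_j\bigr)^{1/p}$, with worst-case ratio $N^{1-1/p}$ over $N$ nonzero levels, and $N$ can genuinely be of order $\log|F|$ here (take $a_j\approx 1/N$ over an admissible range of $j$ of length $\sim\log|F|$). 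So even granting your bilinear input, the factored argument reproduces precisely the $(\log|F|)^{O(1)}$ loss of the dyadic pigeonhole that the theorem is meant to remove. The structural point is the opposite of what you assert: a symmetric bilinear bound buys nothing over the diagonal one; what kills the logarithm is that the bilinear bound is \emph{asymmetric} in $|E_j|$ and $|E_k|$, which creates geometric decay in $|j-k|$ and lets the double sum converge.

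Relatedly, the estimate that actually comes out of the Iosevich--Koh character-sum computation (the paper's Lemma~\ref{lem:setbounds}) is not a clean product but the three-term asymmetric bound
\[\sum_{\substack{a+b=c+d\\ a,c\in A,\; b,d\in B}}1 \;\ll\; |F|^{(n-2)/4}|A||B|^{3/2}+|F|^{(n-2)/2}|A||B|+|F|^{-1}|A||B|^{2},\]
and ``tracking constants'' will not convert it into your product form (applied with $|A|=1$, $|B|=|\mathcal{P}|$ it is strictly weaker than the product bound for large $n$). The paper's proof therefore has to interleave this lemma with the trivial bound $|E_j||E_k|^{1/2}$, split into cases according to whether $|E_j|,|E_k|$ lie above or below $|F|^{(n-2)/2}$ (and $|F|^{n-1}$), choose which set plays the role of $A$ versus $B$ in each regime, and finish with a Cauchy--Schwarz in the dyadic index against the weights $c_k=2^{-kp}|E_k|$. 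None of this machinery appears in your proposal, and without it the argument does not reach the endpoint. (Your reduction of the second estimate $\mathcal{R}(2\to \frac{2n^2}{n^2-2n+2})\ll 1$ to the first via the Mockenhaupt--Tao/Iosevich--Koh machinery is fine and matches the paper.)
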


This improves upon Theorems 1, 2, and 3 of \cite{IK} by removing the logarithmic power of $|F|$. We will only prove $\mathcal{R}(\frac{4n}{3n-2} \rightarrow 4) \ll 1$ here. The estimate $\mathcal{R}(2 \rightarrow \frac{2n^2}{n^2-2n+2}) \ll 1$ follows from the previous estimate from the arguments of \cite{IK}.

We have restricted our attention to the case of the parabaloid, however our methods are more generic and likely can be combined with the arguments of \cite{IKs} and \cite{IKq} for the cases of spheres and more general quadratic surfaces, respectively.

\section{A Restriction Theorem for the Paraboloid in $F_*^3$}
We first prove Theorem \ref{thm:dim3} for the paraboloid in $F_*^3$, restated below in an equivalent formulation:

\begin{thm1} For every function $f: \mathcal{P} \rightarrow \mathbb{C}$, we have that:
\[||(f d\sigma)^\vee ||_{L^4(F^3, dx)} \leq C ||f||_{L^{8/5}(\mathcal{P}, d\sigma)}\]
for some constant $C$.
\end{thm1}
More concretely, we show below that $C =4 \left( \frac{1}{\left(1-2^{-1/5}\right) \left(1-2^{-2/5}\right)}  + \frac{1}{1-2^{-3/5}} \right)^{1/2}\leq 6$ suffices.
\vspace*{0.5cm}

We start by following the strategy of \cite{MT}, generalizing to the bilinear setting. We employ the following two lemmas. The first one is standard.
\begin{lemma} \label{lem:incidence} Let $P$ be a collection of points in $F^2$, and let $L$ be a collection of lines in $F^2$. Then:
\[|\{(p, \ell) \in P \times L: p \in \ell\}| \leq \min \left( |P|^{1/2}|L| + |P|, |P||L|^{1/2} + |L|\right).\]
\end{lemma}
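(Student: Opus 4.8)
The plan is to prove this by the standard Cauchy--Schwarz incidence argument. Write $I := |\{(p,\ell) \in P \times L : p \in \ell\}|$ for the incidence count to be bounded. By point--line duality in $F^2$ it suffices to establish the two bounds $I \leq |P|\,|L|^{1/2} + |L|$ and $I \leq |P|^{1/2}|L| + |P|$ separately and then take the smaller one.

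For the first bound, for each line $\ell \in L$ put $n_\ell := |\{p \in P : p \in \ell\}|$, so that $I = \sum_{\ell \in L} n_\ell$. The one combinatorial input is that two distinct points of $F^2$ lie on at most one common line; summing over pairs of points of $P$, this gives $\sum_{\ell \in L} \binom{n_\ell}{2} \leq \binom{|P|}{2}$, and hence $\sum_{\ell \in L} n_\ell^2 \leq |P|^2 + I$. Applying Cauchy--Schwarz over the (at most $|L|$) lines with $n_\ell \geq 1$ then yields
\[
I \leq |L|^{1/2}\left(\sum_{\ell \in L} n_\ell^2\right)^{1/2} \leq |L|^{1/2}\left(|P|^2 + I\right)^{1/2},
\]
i.e. $I^2 - |L|\,I - |L|\,|P|^2 \leq 0$. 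Solving this quadratic inequality in $I$ and using $\sqrt{a+b} \leq \sqrt{a} + \sqrt{b}$ gives $I \leq |P|\,|L|^{1/2} + |L|$.

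The second bound comes from running the identical argument with the roles of points and lines interchanged: setting $m_p := |\{\ell \in L : p \in \ell\}|$ and using that two distinct lines meet in at most one point, one obtains $\sum_{p \in P} \binom{m_p}{2} \leq \binom{|L|}{2}$, then $\sum_{p \in P} m_p^2 \leq |L|^2 + I$, and finally $I^2 - |P|\,I - |P|\,|L|^2 \leq 0$, whence $I \leq |P|^{1/2}|L| + |P|$. I do not expect any genuine difficulty here: this is the elementary incidence bound rather than a Szemer\'edi--Trotter-type estimate, so no cell decomposition or algebraic input is needed. The only mild care points are the step from the pair count $\sum_{\ell} \binom{n_\ell}{2} \leq \binom{|P|}{2}$ to the bound on $\sum_\ell n_\ell^2$ and the clean discharge of the quadratic inequality; alternatively one can split $L$ (resp.\ $P$) into the lines rich in points of $P$ (resp.\ points rich in lines of $L$) and the rest, bounding the first contribution by the pair count and the second trivially by $|L|$ (resp.\ $|P|$), which avoids the quadratic inequality at the cost of an irrelevant constant factor.
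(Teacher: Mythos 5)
Your proof is correct: the double-counting of point pairs (resp.\ line pairs) via ``two points determine at most one line'' (resp.\ ``two lines meet in at most one point''), followed by Cauchy--Schwarz and the resolution of the resulting quadratic inequality, is exactly the standard argument, and all the steps (in particular $\sum_\ell n_\ell^2 \leq |P|^2 + I$ and the use of $\sqrt{a+b}\leq\sqrt{a}+\sqrt{b}$) check out. The paper does not supply a proof of this lemma, simply calling it standard, and your argument is the one being invoked.
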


\begin{lemma} \label{lem:bilinear} We let $A, B \subseteq \mathcal{P}$ be arbitrary subsets of $\mathcal{P}$. We define $\chi_A, \chi_B$ to be the corresponding characteristic functions from $\mathcal{P}$ to $\{0,1\}$. Then:
\[\left|\left|(\chi_A d\sigma)^\vee (\chi_B d\sigma)^\vee \right|\right|_{L^2(F^3,dx)}^2 \leq 2 \cdot \frac{|F|^{3}}{|\mathcal{P}|^4}\cdot \min \left(|A|^{1/2}|B|^2 + |A||B|, |A||B|^{3/2}+ |B|^2\right) .\]
\end{lemma}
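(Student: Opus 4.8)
The plan is to expand the $L^2$ norm on the left-hand side by Plancherel/orthogonality and reduce it to counting additive quadruples on the paraboloid, which in turn reduces to a point-line incidence count that we can estimate via Lemma~\ref{lem:incidence}. First I would write out
\[
\left|\left|(\chi_A d\sigma)^\vee (\chi_B d\sigma)^\vee\right|\right|_{L^2(F^3,dx)}^2
= \sum_{x \in F^3} |(\chi_A d\sigma)^\vee(x)|^2 \, |(\chi_B d\sigma)^\vee(x)|^2,
\]
and then substitute the definition of $(\cdot\, d\sigma)^\vee$ as a sum over $\mathcal{P}$. Expanding the product of four exponential sums and carrying out the sum over $x \in F^3$, the character orthogonality relation $\sum_{x} e(x\cdot(\xi_1-\xi_2+\xi_3-\xi_4)) = |F|^3 \, [\xi_1 - \xi_2 + \xi_3 - \xi_4 = 0]$ collapses the expression to
\[
\frac{|F|^3}{|\mathcal{P}|^4} \cdot \#\{(a_1,a_2,b_1,b_2) \in A \times A \times B \times B : a_1 - a_2 = b_2 - b_1\},
\]
where I have grouped the two points coming from $\chi_A$ and the two from $\chi_B$. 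So the whole problem becomes: bound the number of solutions to $a_1 + b_1 = a_2 + b_2$ with $a_i \in A$, $b_i \in B$, i.e. the number of coincidences among the sums $A + B$.

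The second step is the geometric heart of the argument: translating this additive-energy count into an incidence count. For each value $v \in F^3$ in the common sumset, the number of representations $v = a + b$ with $a \in A$, $b \in B$ equals $|A \cap (v - B)|$. Here one uses the key algebraic feature of the paraboloid: if $a = (\gamma, \gamma\cdot\gamma)$ and $b = (\delta, \delta\cdot\delta)$ lie on $\mathcal{P}$ and $a + b = v = (v', v_n)$, then $\gamma + \delta = v'$ and $\gamma\cdot\gamma + \delta\cdot\delta = v_n$; substituting $\delta = v' - \gamma$ into the second equation gives a single \emph{linear} equation in $\gamma$ (the quadratic terms $\gamma\cdot\gamma$ cancel), namely $2\gamma \cdot v' = v_n + v'\cdot v' $... so $\gamma$ ranges over an affine hyperplane in $F_*^{n-1}$, which in the $n=3$ case is an affine \emph{line} in $F^2$. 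Thus each $v$ determines a line $\ell_v$, and a representation $v = a+b$ corresponds to a point of $A$ (viewed in the $\gamma$-coordinate, a point of $F^2$) lying on $\ell_v$; moreover $v \mapsto \ell_v$ is injective. Summing the number of representations of $v$ weighted by the number of representations — i.e. the additive energy — therefore becomes $\sum_{v} |A \cap \ell_v| \cdot |B \cap \ell_v|$ (after identifying $B$-points on $\ell_v$ the same way), which by Cauchy–Schwarz is controlled by $\big(\sum_v |A\cap\ell_v|^2\big)^{1/2}\big(\sum_v |B\cap\ell_v|^2\big)^{1/2}$; I then bound each such sum by an incidence count between the relevant point set and the family of lines $\{\ell_v\}$ (expanding the square, $\sum_v |A \cap \ell_v|^2$ counts pairs of points of $A$ together with a line through both, which is again an incidence-type quantity) and invoke Lemma~\ref{lem:incidence}. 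Carefully bookkeeping which set plays the role of $P$ and which of $L$, and taking the $\min$ over the two ways of doing this, should produce exactly $\min(|A|^{1/2}|B|^2 + |A||B|,\ |A||B|^{3/2} + |B|^2)$, with the factor of $2$ absorbing the lower-order terms in Lemma~\ref{lem:incidence}.

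I expect the main obstacle to be the bookkeeping in the incidence step — making sure the asymmetric roles of $A$ and $B$ are tracked correctly so that both terms of the final $\min$ come out, and verifying that the "$+|P|$" and "$+|L|$" error terms in the incidence bound assemble into the claimed lower-order terms $|A||B|$ and $|B|^2$ rather than something larger. A secondary subtlety is the use of the hypothesis that $-1$ is not a square in $F$ (implicit from the theorem statement, though not in the lemma as stated): this is what guarantees that the affine set $\{\gamma : 2\gamma\cdot v' = \text{const}\}$ is a genuine line with the expected number of points (roughly $|F|$) for every relevant $v$, with no degenerate fibers of size $\sim |F|^2$ arising from isotropic directions; one should check whether this is needed here or only later, and if needed, fold it in when bounding $|\ell_v \cap F^2|$. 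Modulo these points, the computation is routine orthogonality plus one application of Cauchy–Schwarz and one application of Lemma~\ref{lem:incidence}.
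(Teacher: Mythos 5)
Your first step (orthogonality, collapsing to the count of quadruples $a_1+b_1=a_2+b_2$ with $a_i\in A$, $b_i\in B$) matches the paper exactly. The gap is in the geometric step. You claim that for $a=(\gamma,\gamma\cdot\gamma)$, $b=(\delta,\delta\cdot\delta)$ with $a+b=v=(v',v_n)$, substituting $\delta=v'-\gamma$ makes the quadratic terms cancel and leaves a linear equation in $\gamma$. They do not cancel: $\gamma\cdot\gamma+(v'-\gamma)\cdot(v'-\gamma)=2\gamma\cdot\gamma-2\gamma\cdot v'+v'\cdot v'$, so the fiber of the \emph{sum} map over $v$ is a conic (a ``circle'' centered at $v'/2$), not a line. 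The cancellation you want happens only for the \emph{difference} map: $a-b=v$ gives $\gamma\cdot\gamma-(\gamma-v')\cdot(\gamma-v')=2\gamma\cdot v'-v'\cdot v'$, which is linear. Since Lemma~\ref{lem:incidence} is a line-incidence bound (its proof rests on two points determining at most one line, which fails for circles of varying center and radius), your reduction as stated cannot invoke it. The paper sidesteps this by rewriting $a+b=c+d$ as $a-d=c-b$ so that only differences appear.

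Even granting the switch to differences, your plan for extracting the bound does not match the target. Writing the energy as $\sum_v r(v)\,s(v)$ and applying Cauchy--Schwarz treats $A$ and $B$ symmetrically, whereas the claimed bound $\min\left(|A|^{1/2}|B|^2+|A||B|,\ |A||B|^{3/2}+|B|^2\right)$ is asymmetric, with a full factor of $|B|$ (or $|B|^2$) pulled out. The paper obtains this by a different relaxation: it bounds the count by $|B|\cdot\max_{b\in\mathcal{P}}\#\{(a,d,c)\in A\times B\times\mathcal{P}:a-d=c-b\}$, then applies a Galilean transformation $g_{-\nu}$ to translate the fixed $b=(\nu,\nu\cdot\nu)$ to the origin, reducing to counting $\{(a',d')\in A'\times B':a'-d'\in\mathcal{P}\}$; that condition is exactly $x\in\ell(y)$ for the line $\ell(y)=\{x:y\cdot x=y\cdot y\}$, and a single application of Lemma~\ref{lem:incidence} finishes. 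Your instinct about where ``$-1$ is not a square'' enters is in the right neighborhood but slightly off: in the paper it is used to show the lines $\ell(y)$ are pairwise distinct (if $\ell(y)=\ell(y')$ then $(y-y')\cdot(y-y')=0$, forcing $y=y'$) and to dispose of the $a'=0$, $d'\neq 0$ terms, not to control the size of a fiber. To repair your proof you would need both corrections: work with differences throughout, and replace the symmetric Cauchy--Schwarz step with the asymmetric fix-$b$-and-relax-$c$ reduction (or something equivalent) so that the stated $\min$ actually emerges.
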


\begin{proof} By definition of the $L^2$ norm, we have:
\[\left|\left|(\chi_A d\sigma)^\vee (\chi_B d\sigma)^\vee \right|\right|_{L^2(F^3,dx)}^2 =  \sum_{x \in F^3} \left|(\chi_A d\sigma)^\vee (x) (\chi_B d\sigma)^\vee (x)\right|^2.\]
Using the definitions of $(\chi_A d\sigma)^\vee$ and $(\chi_B d\sigma)^\vee$, we can expand this as:
\[ = \sum_{x \in F^3} \left| \frac{1}{|\mathcal{P}|} \sum_{\xi_1 \in \mathcal{P}} \chi_A(\xi_1) e(x\cdot \xi_1) \cdot \frac{1}{|\mathcal{P}|} \sum_{\xi_2 \in \mathcal{P}} \chi_B(\xi_2) e(x\cdot \xi_2)\right|^2.\]
We can rewrite this as:
\[\frac{1}{|\mathcal{P}|^4} \sum_{x\in F^3} \left| \sum_{\xi_1 \in \mathcal{P}} \chi_A(\xi_1) e(x \cdot \xi_1) \cdot \sum_{\xi_2 \in \mathcal{P}} \chi_B(\xi_2) e(x\cdot \xi_2)\right|^2.\]

For any complex number $z$, we note that $|z|^2 = z \overline{z}$, where $\overline{z}$ denotes the complex conjugate of $z$. This allows us to express the above quantity as:
\[ = \frac{1}{|\mathcal{P}|^4} \sum_{x \in F^3} \sum_{a,b,c,d \in \mathcal{P}} \chi_A(a) \chi_B(b) \chi_A(c) \chi_B(d) e(x\cdot a) e(x \cdot b) e(-x\cdot c) e(-x \cdot d).\]
Here, we have used that $\overline{\chi_A} = \chi_A$, $\overline{\chi_B} = \chi_B$, and $\overline{e(x\cdot \xi)} = e(-x\cdot \xi)$.

Since these are finite sums, we can interchange their order to obtain:
\[ = \frac{1}{|\mathcal{P}|^4} \sum_{\substack{a,c \in A \\ b,d \in B}} \sum_{x \in F^3}e(x \cdot (a+b-c-d)).\]
This inner sum will be equal to zero except when $a+b = c+d$. When this occurs, the inner sum will equal $|F|^3$. Thus, we get:
\[ = \frac{1}{|\mathcal{P}|^4} \sum_{\substack{a+b = c+d \\ a,c \in A \\ b,d \in B}} |F|^3 = \frac{|F|^3}{|\mathcal{P}|^4} \sum_{\substack{a+b = c+d \\ a,c \in A \\ b,d \in B}} 1.\]

Using the fact that $A \subseteq \mathcal{P}$, we observe:
\[\sum_{\substack{a+b = c+d \\ a,c \in A \\ b,d \in B }} 1 = \sum_{\substack{a-d = c-b \\ a,c \in A \\ b,d \in B }} 1 \leq \sum_{\substack{a-d = c-b \\ a \in A \\ b,d \in B \\ c \in \mathcal{P}}} 1.\]
This can be bounded above by:
\[\leq |B| \cdot \max_{b \in B} \sum_{\substack{a-d = c-b \\ a \in A \\ d \in B \\ c \in \mathcal{P}}} 1 \leq |B| \cdot \max_{ b\in \mathcal{P}} \sum_{\substack{a-d = c-b \\ a \in A \\ d\in B \\ c \in \mathcal{P}}} 1 .\]

We now consider the quantity inside the maximum for an arbitrary, fixed $b \in \mathcal{P}$. To bound this, we will use the Galilean transformation $g_{\delta}: \mathcal{P} \rightarrow \mathcal{P}$, which is defined for each $\delta \in F_*^{2}$ by:
\[g_{\delta}(\gamma, \tau) := (\gamma+ \delta, \tau + 2 \gamma \cdot \delta + \delta \cdot \delta),\]
where $(\gamma, \tau) \in F_*^{2} \times F_* = F_*^3$. We note that for each $\delta \in F_*^{2}$, this is a bijective map from $\mathcal{P}$ to itself.

\begin{claim} We write $b \in \mathcal{P}$ as $b = (\nu, \nu \cdot \nu)$, for $\nu \in F_*^{2}$. We also define $A':= g_{-\nu} (A)$ and $B' := g_{-\nu}(B)$. We then have:
\[\sum_{\substack{a-d = c-b\\ a \in A \\ d \in B \\ c\in \mathcal{P}}} 1 = \sum_{\substack{a'-d' \in \mathcal{P} \\ a' \in A' \\ d' \in B'}} 1.\]
\end{claim}

\begin{proof}[Proof of Claim] We first observe that
\[\sum_{\substack{a-d = c-b\\ a \in A \\ d \in B \\ c\in \mathcal{P}}} 1  = \sum_{\substack{a-d+b \in \mathcal{P} \\ a \in A \\ d \in B}}1.\]
We will show that for $a \in A, d \in B$, $a-d + b \in \mathcal{P}$ if and only if $g_{-\nu}(a) - g_{-\nu} (d) \in \mathcal{P}$.

We can write $a$ as $(\alpha, \alpha \cdot \alpha)$ for some $\alpha \in F_*^{2}$, and $d$ as $(\eta, \eta \cdot \eta)$ for some $\eta \in F_*^{2}$. We can then compute $g_{-\nu}(a) - g_{-\nu}(d)$ as:
\[g_{-\nu}(a) - g_{-\nu}(d) = (\alpha - \eta, \alpha \cdot \alpha - \eta \cdot \eta - 2(\alpha - \eta)\cdot \nu).\]
This will be an element of $\mathcal{P}$ if and only if:
\[(\alpha - \eta) \cdot (\alpha - \eta) = \alpha \cdot \alpha - \eta \cdot \eta - 2(\alpha - \eta)\cdot \nu,\]
which is equivalent to:
\[ \eta \cdot \eta -\alpha \cdot \eta + (\alpha - \eta) \cdot \nu = 0.\]

Now, $a-d + b \in \mathcal{P}$ holds if and only if:
\[(\alpha - \eta + \nu) \cdot (\alpha - \eta + \nu) = \alpha \cdot \alpha - \eta \cdot \eta + \nu \cdot \nu,\]
which is also equivalent to:
\[\eta \cdot \eta - \alpha \cdot \eta + \alpha \cdot \nu - \eta \cdot \nu = 0.\]
Thus, $a-d + b \in \mathcal{P}$ if and only if $g_{-\nu}(a) - g_{-\nu} (d) \in \mathcal{P}$. We may then conclude that
\[\sum_{\substack{a-d = c-b\\ a \in A \\ d \in B \\ c\in \mathcal{P}}} 1 = \sum_{\substack{a'-d' \in \mathcal{P} \\ a' \in A' \\ d' \in B'}} 1,\]
since $g_{-\nu}$ is a bijection from $\mathcal{P}$ to $\mathcal{P}$.
\end{proof}

We are now interested in bounding the quantity \[\sum_{\substack{a'-d' \in \mathcal{P} \\ a' \in A' \\ d' \in B'}} 1.\] We note that the contribution to this sum from terms where $d = 0$ is at most $|A'| = |A|$. We note there can be no contribution from terms where $a = 0$ and $d \neq 0$, since having both of $d, -d$ in $\mathcal{P}$ is impossible for $d \neq 0$. Hence, we have:
\[\sum_{\substack{a'-d' \in \mathcal{P} \\ a' \in A' \\ d' \in B'}} 1 \leq |A| + \sum_{\substack{a'-d' \in \mathcal{P}\\ a' \in A'-\{0\} \\ d' \in B' - \{0\}}} 1.\]

We now define the sets $X_{A'} := \{\gamma \in F_*^{2} : (\gamma, \gamma \cdot \gamma) \in A' - \{0\}\}$, $X_{B'} := \{\gamma \in F_*^{2}: (\gamma, \gamma \cdot \gamma) \in B' - \{0\}\}$. Letting $a' = (x, x \cdot x)$ and $d' = (y, y \cdot y)$, we note that $a' - d' \in \mathcal{P}$ is equivalent to $x \cdot y = y\cdot y$.

For each $y \in F_*^{2}$, we can define a line in $F_*^{2}$ by $\ell(y) := \{x \in F_*^{2}: y \cdot x = y \cdot y\}$. We now prove that these lines are distinct, i.e. $y$ and $\ell(y)$ are in bijective correspondence. We suppose that $\ell(y) = \ell(y')$ for $y, y' \in F_*^2$. We note that $y \in \ell(y)$ and $y' \in \ell(y')$. Since these lines are the same, we must also have $y \in \ell(y')$ and $y' \in \ell(y)$. By definition of $\ell(y), \ell(y')$, this implies that $y \cdot y = y' \cdot y = y' \cdot y'$. Hence,
$(y-y') \cdot (y- y')  = y \cdot y - 2 y' \cdot y + y' \cdot y' = 0$.
However, since $-1$ is not a square in $F$, this implies that $y - y'$ must be the zero vector in $F_*^2$. Thus, $y = y'$.

We define $L_{B'}$ to be the collection of lines $L_{B'} := \{\ell(y): y \in X_{B'}\}$. Since these lines are distinct and $a'-d' \in \mathcal{P}$ if and only if the corresponding $x,y$ satisfy $x \in \ell(y)$, we have that:
\[\sum_{\substack{a'-d' \in \mathcal{P}\\ a' \in A'-\{0\} \\ d' \in B' - \{0\}}} 1 = \left|\{(\ell(y), x) \in L_{B'} \times X_{A'}: x \in \ell(y)\}\right|.\]

We now apply Lemma \ref{lem:incidence} to conclude:
\[\sum_{\substack{a'-d' \in \mathcal{P}\\ a' \in A'-\{0\} \\ d' \in B' - \{0\}}} 1 \leq \min \left( |X_{A'}|^{1/2} |L_{B'}| + |X_{A'}|, |X_{A'}| |L_{B'}|^{1/2} + |L_{B'}|\right).\]
Since $|L_{B'}| = |B'|$, $|X_{A'}| \leq |A'| = |A|$, and $|X_{B'}| \leq |B'| = |B|$, we have:
\[\sum_{\substack{a'-d' \in \mathcal{P}\\ a' \in A'-\{0\} \\ d' \in B' - \{0\}}} 1 \leq \min \left( |A|^{1/2} |B| + |A|, |A||B|^{1/2}+|B|\right).\]

This yields:
\[ \sum_{\substack{a+b = c+d \\ a,c \in A \\ b,d \in B}} 1 \leq |B| \left(|A| + \min \left( |A|^{1/2}|B| + |A|, |A||B|^{1/2}+|B|\right) \right).\]
Since $|B||A| \leq \min \left(|A|^{1/2}|B|^2 + |A||B|, |A||B|^{3/2}+ |B|^2\right)$, we have:
\[ \sum_{\substack{a+b = c+d \\ a,c \in A \\ b,d \in B}} 1 \leq 2 \min \left(|A|^{1/2}|B|^2 + |A||B|, |A||B|^{3/2}+ |B|^2\right).\]

Recalling that
\[\left|\left|(\chi_A d\sigma)^\vee (\chi_B d\sigma)^\vee \right|\right|_{L^2(F^3,dx)}^2 = \frac{|F|^3}{|\mathcal{P}|^4} \sum_{\substack{a+b = c+d \\ a,c \in A \\ b,d \in B}} 1,\]
we see that
\[\left|\left|(\chi_A d\sigma)^\vee (\chi_B d\sigma)^\vee \right|\right|_{L^2(F^3,dx)}^2 \leq 2 \cdot \frac{|F|^{3}}{|\mathcal{P}|^4}\cdot \min \left(|A|^{1/2}|B|^2 + |A||B|, |A||B|^{3/2}+ |B|^2\right).\]
This concludes the proof of the lemma.
\end{proof}

\begin{proof}[Proof of Theorem]
Our task reduces to proving:
\[||(f d\sigma)^\vee ||_{L^4(F^3, dx)}=|F|^{3/4-2} (  \sum_{\substack{a+b = c+d \\ a,b,c,d \in \mathcal{P}}} f(a)f(b)\overline{f(c)}\overline{f(d)}\; )^{1/4}\]
\begin{equation}\label{eq:combinequality1}
\leq C  |F|^{-5/4 }  (  \sum_{\xi \in \mathcal{P}} |f(\xi)|^{8/5} )^{\frac{5}{8}}= C ||f||_{L^{8/5}(\mathcal{P},d\sigma)}.
\end{equation}

We note that if we replace $f$ by the non-negative, real-valued function $|f|$, then the quantity
$\sum_{\substack{a+b = c+d \\ a,b,c,d \in \mathcal{P}}} f(a)f(b)\overline{f(c)}\overline{f(d)}$ cannot decrease (by the triangle inequality), and $||f||_{L^{8/5}(\mathcal{P},d\sigma)}$ remains the same. Therefore, we can assume without loss of generality that $f$ is a non-negative, real-valued function. Moreover, if we replace $|f(\xi)|$ by the smallest power of $2$ larger than $|f(\xi)|$, so that $f$ is a dyadic step-function, the left-hand side will not decrease, while the right-hand side will increase by at most a factor of $2$. Thus if we can establish inequality (\ref{eq:combinequality1}) for dyadic step functions for $C'$, the same inequality will hold for all complex-valued functions with $C$ = $2C'$). By the homogeneity of each side of (\ref{eq:combinequality1}), we may assume that $ \left(  \sum_{\xi \in \mathcal{P}} |f(\xi)|^{8/5} \right)^{\frac{5}{8}}=1$, and from the previous remarks we may assume that $f(\xi)= \sum_{j=0}^{\infty}2^{-j}\chi_{E_{j}}(\xi)$, where the $E_{j}$'s are disjoint subsets of $\mathcal{P}$. We use later the simple consequences that $\sum_{j=0}^{\infty} 2^{-j\cdot 8/5}|E_j| = 1$ and $|E_{j}| \leq 2^{j\cdot 8/5}$ for all $j$. It therefore suffices to show that:
\[ || (f d\sigma)^{\vee} ||_{L^4 (F^3,dx)}^{2} = || (f d\sigma)^{\vee} (f d\sigma)^{\vee}||_{L^2 (F^3,dx)} \leq (C')^{2}  |F|^{-5/2} .\]

We calculate
\[ || (f d\sigma)^{\vee}(f d\sigma)^{\vee}||_{L^2 (F^3,dx)} = \left|\left| \left(\sum_{j=0}^{\infty}2^{-j}\chi_{E_{j}}d\sigma\right)^{\vee}\left(\sum_{k=0}^{\infty}2^{-k}\chi_{E_{k}}d\sigma\right)^\vee \right|\right|_{L^2 (F^3,dx)} \]
\[\leq \sum_{j=0}^{\infty}2^{-j}\sum_{k=0}^{\infty}2^{-k} ||(\chi_{E_{j}}d\sigma)^{\vee} (\chi_{E_{k}}d\sigma)^{\vee}  ||_{L^2 (F^3,dx)} \leq 2 \sum_{0\leq k \leq j}2^{-j-k}||(\chi_{E_{j}}d\sigma)^{\vee} (\chi_{E_{k}}d\sigma)^{\vee}  ||_{L^2 (F^3,dx)}.\]

From Lemma \ref{lem:bilinear}, we have:
\[||(\chi_{E_{j}}d\sigma)^{\vee} (\chi_{E_{k}}d\sigma)^{\vee}  ||_{L^2 (F^3,dx)} \leq 2^{1/2} |F|^{-5/2}(|E_j|^{1/2}|E_k|^2 + |E_j||E_k|)^{1/2}.\]
Using the fact that $(|E_j|^{1/2}|E_k|^2 + |E_j||E_k|)^{1/2} \leq (2 \max (|E_j|^{1/2}|E_k|^2, |E_j||E_k|))^{1/2} \leq 2^{1/2} (|E_j|^{1/4}|E_k| + |E_j|^{1/2}|E_k|^{1/2}),$ we obtain:
\[||(\chi_{E_{j}}d\sigma)^{\vee} (\chi_{E_{k}}d\sigma)^{\vee}  ||_{L^2 (F^3,dx)} \leq 2 |F|^{-5/2} \left(|E_j|^{1/4}|E_k| + |E_j|^{1/2}|E_k|^{1/2}\right).\]

Thus it suffices to show

\[ 2^{2} \sum_{0\leq k \leq j}2^{-j-k}\left(|E_j|^{1/4}|E_k| + |E_j|^{1/2}|E_k|^{1/2}\right) \leq (C')^2. \]

We consider the two sums $\sum_{0 \leq k \leq j} 2^{-j-k} |E_j|^{1/4}|E_k|$ and $\sum_{0\leq k \leq j} 2^{-j-k} |E_j|^{1/2}|E_k|^{1/2}$ separately. First we observe:
\[\sum_{0\leq k \leq j}2^{-j-k} |E_j|^{1/4}|E_k| \leq \sum_{0\leq k \leq j}2^{-j-k} 2^{j\cdot 2/5} |E_k| = \sum_{0\leq k \leq j} 2^{-j\cdot 3/5} 2^{-k}|E_k|.  \]

We can alternatively express this last quantity as:
\[\sum_{k=0}^{\infty} 2^{-k}|E_k| \left(\sum_{j=k}^{\infty} 2^{-j\cdot 3/5}\right) = \frac{1}{1-2^{-3/5}}\sum_{k=0}^{\infty}2^{-8/5 \cdot k} |E_k|.\]
Recalling that $\sum_{k=0}^{\infty}2^{-8/5 \cdot k} |E_k| =1$, we have:

\[  \sum_{0 \leq k \leq j} 2^{-j-k} |E_j|^{1/4}|E_k| \leq \frac{1}{1-2^{-3/5}}. \]

To bound the other sum, we simply use $|E_j| \leq 2^{j\cdot 8/5}$ and $|E_k| \leq 2^{k \cdot 8/5}$:
\[\sum_{0\leq k \leq j}2^{-j-k} |E_j|^{1/2}|E_k|^{1/2}\leq \sum_{k=0}^{\infty} 2^{-1/5 \cdot k} \left(\sum_{j=k}^{\infty} 2^{-1/5\cdot j}\right) = \frac{1}{\left(1-2^{-1/5}\right)\left(1-2^{-2/5}\right)}.\]

This shows that for $f$ of the form $f(\xi)= \sum_{j=0}^{\infty}2^{-j}\chi_{E_{j}}(\xi)$, we have $|| (f d\sigma)^{\vee} ||_{L^4 (F^3,dx)}^{2} \leq (C')^2 |F|^{-5/2}$,
for $C' = 2 \left( \frac{1}{\left(1-2^{-1/5}\right) \left(1-2^{-2/5}\right)}  + \frac{1}{1-2^{-3/5}} \right)^{1/2}$.
Therefore, for arbitrary complex-valued functions $f$, we have $||(f d\sigma)^\vee ||_{L^4(F^3, dx)} \leq C ||f||_{L^{8/5}(\mathcal{P}, d\sigma)}$,
where $C$ can be set to \\$4 \left( \frac{1}{\left(1-2^{-1/5}\right) \left(1-2^{-2/5}\right)}  + \frac{1}{1-2^{-3/5}} \right)^{1/2}$.
This concludes the proof of the theorem.
\end{proof}

\section{Restriction Theorem for the Paraboloid in Higher Dimensions}
We now consider the paraboloid $\mathcal{P}$ in $F^n$ for values of $n >3$.
We begin by presenting a combinatorial lemma.

\subsection{A Combinatorial Lemma}
\begin{lemma}\label{lem:setbounds} For any sets $A, B \subseteq \mathcal{P} \subseteq F^n$ where $n \geq 4$ is even or $n$ is odd and $|F| = q^m$ for some prime $q$ congruent to 3 modulo 4 with $m(n-1)$ not a multiple of 4, we have:
\[\sum_{\substack{a+b=c+d \\ a,c \in A \\ b,d \in B}} 1 \ll   |F|^{(n-2)/4} |A| |B|^{3/2} + |F|^{(n-2)/2}|A||B| + |F|^{-1}|A||B|^2,\]
which implies
\[ \left(\sum_{\substack{a+b=c+d \\ a,c \in A \\ b,d \in B }} 1 \right)^{1/2} \ll   |F|^{(n-2)/8} |A|^{1/2} |B|^{3/4} + |F|^{(n-2)/4}|A|^{1/2}|B|^{1/2} + |F|^{-1/2}|A|^{1/2}|B|.\]
\end{lemma}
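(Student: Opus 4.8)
The plan is to establish the first displayed bound; the second follows at once by taking square roots and using $\sqrt{u+v+w}\le\sqrt u+\sqrt v+\sqrt w$.

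First I would rewrite the left‑hand side by Fourier analysis on $F^{n}$, exactly as in the proof of Lemma~\ref{lem:bilinear}: the orthogonality relation $\sum_{x\in F^{n}}e(x\cdot(a+b-c-d))=|F|^{n}\mathbf{1}[a+b=c+d]$ gives
\[
\sum_{\substack{a+b=c+d\\ a,c\in A\\ b,d\in B}}1 \;=\; \frac{1}{|F|^{n}}\sum_{x\in F^{n}}|K_A(x)|^{2}|K_B(x)|^{2},\qquad K_E(x):=\sum_{\xi\in E}e(x\cdot\xi).
\]
Writing points of $\mathcal P$ as $(\gamma,\gamma\cdot\gamma)$ with $\gamma$ ranging over sets $S_A,S_B\subseteq F_*^{n-1}$ of sizes $|A|,|B|$, and splitting $x=(\bar x,x_n)\in F^{n-1}\times F$, one has $K_A((\bar x,0))=\sum_{\gamma\in S_A}e(\bar x\cdot\gamma)$; so bounding $|K_B((\bar x,0))|\le|B|$ and using $\sum_{\bar x\in F^{n-1}}|K_A((\bar x,0))|^{2}=|F|^{n-1}|A|$ shows that the block of frequencies with $x_n=0$ (which includes $x=0$) contributes at most $|F|^{-1}|A||B|^{2}$ — the third term. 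I would deliberately keep this step asymmetric, i.e.\ linear in $|A|$ rather than symmetrised by Cauchy--Schwarz, since that asymmetry is the whole point of working bilinearly.

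The main work is the block $x_n\ne 0$. Here I would complete the square in $\gamma$: $K_A((\bar x,x_n))$ equals a unimodular factor times $\sum_{\gamma\in S_A}h_{x_n}(\gamma+z)$, where $h_{t}(\gamma):=e(t\,\gamma\cdot\gamma)$ is the Gauss kernel of $\gamma\mapsto\gamma\cdot\gamma$ and $z=z(\bar x,x_n)$; as $\bar x$ varies (with $x_n$ fixed) the shift $z$ sweeps out all of $F_*^{n-1}$, so this part of the count becomes, up to the normalisation $|F|^{-n}$, a sum over $x_n\ne0$ of $\sum_{y}|(\mathbf{1}_{S_A}*h_{x_n})(y)|^{2}|(\mathbf{1}_{S_B}*h_{x_n})(y)|^{2}$. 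Two properties of $h_{x_n}$ drive the estimate: its Fourier transform is flat of modulus $|F|^{(n-1)/2}$ — the classical nondegenerate Gauss‑sum evaluation — which controls the $L^{2}$ mass $\sum_y|(\mathbf{1}_{S_B}*h_{x_n})(y)|^{2}=|F|^{n-1}|B|$ by Parseval; and the arithmetic hypothesis on $n,q,m$, which is precisely the condition forcing the form $\gamma\mapsto\gamma\cdot\gamma$ on $F_*^{n-1}$ to have the \emph{minimal} Witt index $\lfloor(n-2)/2\rfloor$. This Witt‑index statement is the essential geometric input: it says $\mathcal P$ contains no affine subspace of dimension exceeding $\lfloor(n-2)/2\rfloor$, equivalently that the level sets $\{\gamma\cdot\gamma=r\}$ and their translates have point counts $|F|^{n-2}+O(|F|^{(n-2)/2})$ with the favourable sign on the error term. (That a hypothesis of this exact strength is unavoidable can be seen by testing the asserted inequality with $A=B=\Lambda$ for $\Lambda$ a maximal affine subspace of $\mathcal P$: then the left side equals $|\Lambda|^{3}$, and — after absorbing the $|F|^{-1}|\Lambda|^{3}$ term — the inequality forces $|\Lambda|\ll|F|^{(n-2)/2}$, hence $\dim\Lambda\le\lfloor(n-2)/2\rfloor$. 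When $n$ is even this holds automatically; the work is for odd $n$.) After this, repeated Cauchy--Schwarz and Parseval — again arranged so as to pull out a single factor $|A|$ and to dispose of the $\mathbf{1}_{S_B}$ factor through the Gauss‑sum magnitude — reduce everything to controlling how concentrated $S_A$ and $S_B$ can be along the fibres of $\gamma\mapsto\gamma\cdot\gamma$; the Witt‑index bound supplies exactly this control, and reassembling the pieces produces the remaining two terms $|F|^{(n-2)/4}|A||B|^{3/2}$ and $|F|^{(n-2)/2}|A||B|$. In outline this parallels the combinatorial argument of Iosevich--Koh, now run in the bilinear setting.

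I expect the genuine obstacle to be this last point: getting the power of $|F|$ in the main term down to $|F|^{(n-2)/2}$ (shared between the two $A$‑factors it becomes the $|F|^{(n-2)/4}$ in the first term). A crude estimate that treats $h_{x_n}$ as a mere bounded kernel, or that bounds the point--hyperplane incidences in $F_*^{n-1}$ trivially, only yields a bound of the shape $|F|^{n-2}|A||B|$, which is off by a full power of $|F|^{1/2}$; closing the gap requires genuinely exploiting the oscillation of the Gauss kernel together with the minimality of the Witt index, so that the error terms in the relevant quadric point counts are of size $|F|^{(n-2)/2}$ rather than $|F|^{n-2}$. The remainder is bookkeeping of the various Cauchy--Schwarz exponents.
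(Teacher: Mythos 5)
Your Fourier-side identity
$\sum_{a+b=c+d}1=|F|^{-n}\sum_{x}|K_A(x)|^2|K_B(x)|^2$
and the treatment of the $x_n=0$ block are correct and do produce the $|F|^{-1}|A||B|^2$ term. But the $x_n\neq 0$ block — which carries the other two terms and is the entire content of the lemma — is never actually estimated. You correctly identify the ingredients that must enter (the $|F|^{(n-1)/2}$ Gauss-sum magnitude, the minimal Witt index of $\gamma\mapsto\gamma\cdot\gamma$ under the stated arithmetic hypothesis), and you correctly diagnose that naive bounds lose a factor $|F|^{1/2}$, but you stop at ``reassembling the pieces produces the remaining two terms'' without doing any of the assembly. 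As written, this is a gap, not a proof. There is also a structural worry: in your decomposition you retain $|K_A(x)|^2$, which is quadratic in the indicator of $A$, and for $x_n\neq 0$ you give no mechanism for producing a bound that is linear in $|A|$; your $L^\infty$-on-$B$, Parseval-on-$A$ trick works at $x_n=0$ because $K_A$ is then an ordinary Fourier transform, but for $x_n\neq 0$ it is a Gauss-kernel convolution and the analogous step is precisely what needs proof.

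The paper avoids this difficulty with a different, more economical reduction. It first relaxes $c\in A$ to $c\in\mathcal P$ — harmless here because the target bound is only linear in $|A|$ — turning the count into $\sum_{a\in A,\,b,d\in B}\mathbf{1}[a+b-d\in\mathcal P]$. Only the single $\mathcal P$-membership constraint is then expanded as a one-variable exponential sum $\delta(t)=|F|^{-1}\sum_{s}e(st)$; separating $s=0$ gives the $|F|^{-1}|A||B|^2$ term exactly as in your $x_n=0$ block, and for $s\neq 0$ two applications of Cauchy--Schwarz (in $a$, then in $d$, enlarging $d\in B$ to $d\in F_*^{n-1}$) isolate the quantity $M(\underline a)$ of Iosevich--Koh. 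The key economy is the observation that the Iosevich--Koh proof of the bound on $M(\underline a)$ never uses $a\in B$, so it applies verbatim with distinct $A$ and $B$; the Gauss-sum and Witt-index work you are proposing to redo is thereby imported as a black box. If you want to pursue your decomposition, you would essentially need to reprove that $M$-estimate from scratch on the Fourier side; the faster route is to follow the relaxation/Cauchy--Schwarz reduction and then cite Lemmas 7 and 8 of \cite{IK}, noting only that their hypothesis $A=B$ is never used in their proof.
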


\begin{proof} We follow the strategy used to prove Lemmas 7 and 8 in \cite{IK}, generalizing it appropriately to allow arbitrary $A$ and $B$ (in the \cite{IK} lemmas, $A = B$). We first note:
\[\sum_{\substack{a+b=c+d \\ a,c \in A \\ b,d \in B}} 1 \leq \sum_{\substack{a+b = c+d \\ a \in A \\ b,d \in B \\ c \in \mathcal{P}}} 1 = \sum_{\substack{a+b-d \in \mathcal{P} \\ a \in A \\ b,d \in B}} 1.\]
We can express a point $a \in \mathcal{A}$ as $a = (\underline{a}, \underline{a} \cdot \underline{a})$, for some $\underline{a} \in F_*^{n-1}$. Then, $a + b -d \in \mathcal{P}$ if and only if $\underline{a} \cdot \underline{b} -\underline{a} \cdot \underline{d}-\underline{b} \cdot \underline{d} + \underline{d} \cdot \underline{d} = 0$. We let $\delta$ denote the function on $F_*$ which is 1 when the input is 0 and is 0 otherwise. We then have:
\[\sum_{\substack{a+b-d \in \mathcal{P} \\ a \in A \\ b,d \in B}} 1 = \sum_{\substack{a \in A \\ b,d \in B}} \delta(\underline{a} \cdot \underline{b} - \underline{a} \cdot \underline{d} -\underline{b} \cdot \underline{d} + \underline{d} \cdot \underline{d}).\]

Now, for any value $t \in F_*$, we note that $\delta(t) = |F|^{-1} \sum_{s \in F_*} e(st)$. Thus, we have:
\[\sum_{\substack{a \in A \\ b,d \in B}} \delta(\underline{a} \cdot \underline{b} - \underline{a} \cdot \underline{d} -\underline{b} \cdot \underline{d} + \underline{d} \cdot \underline{d}) = |F|^{-1} \sum_{\substack{a \in A \\ b,d \in B}} \sum_{s \in F_*} e(s(\underline{a} \cdot \underline{b} - \underline{a} \cdot \underline{d} -\underline{b} \cdot \underline{d} + \underline{d} \cdot \underline{d})).\]
We note that when $s = 0$, the value of the sum over $a,b,d$ is equal to $|A||B|^2$. This gives us:
\[|F|^{-1} \sum_{\substack{a \in A \\ b,d \in B}} \sum_{s \in F_*} e(s(\underline{a} \cdot \underline{b} - \underline{a} \cdot \underline{d} -\underline{b} \cdot \underline{d} + \underline{d} \cdot \underline{d})) = |F|^{-1} |A||B|^2 + |F|^{-1}\sum_{\substack{a \in A \\ b,d \in B}}\sum_{\substack{s \in F_* \\ s \neq 0}} e(s(\underline{a} \cdot \underline{b} - \underline{a} \cdot \underline{d} -\underline{b} \cdot \underline{d} + \underline{d} \cdot \underline{d})).\]

We now consider upper bounding the quantity
\[\left| \sum_{\substack{a \in A \\ b,d \in B}}\sum_{\substack{s \in F_* \\ s \neq 0}} e(s(\underline{a} \cdot \underline{b} - \underline{a} \cdot \underline{d} -\underline{b} \cdot \underline{d} + \underline{d} \cdot \underline{d})) \right|^2 .\]
By the triangle inequality, this is:
\[\leq \left( \sum_{a \in A} \left| \sum_{\substack{b,d \in B \\ s \in F_* \\ s \neq 0}} e(s(\underline{a} \cdot \underline{b} - \underline{a} \cdot \underline{d} -\underline{b} \cdot \underline{d} + \underline{d} \cdot \underline{d}))\right|\right)^2.\]
By applying the Cauchy-Schwarz inequality to the sum over $a$, this is:
\[\leq |A| \sum_{a \in A} \left|\sum_{\substack{b,d \in B \\ s \in F_* \\ s \neq 0}} e(s(\underline{a} \cdot \underline{b} - \underline{a} \cdot \underline{d} -\underline{b} \cdot \underline{d} + \underline{d} \cdot \underline{d}))\right|^2.\]
Again employing the triangle inequality, we have:
\[\leq |A|\sum_{a \in A} \left( \sum_{ d \in B} \left| \sum_{\substack{b \in B \\ s \in F_* \\ s\neq 0}} e(s(\underline{a} \cdot \underline{b} - \underline{a} \cdot \underline{d} -\underline{b} \cdot \underline{d} + \underline{d} \cdot \underline{d}))\right|\right)^2.\]
By applying the Cauchy-Schwarz inequality to the sum over $d$, this is:
\[\leq |A| |B| \sum_{a \in A} \sum_{d \in B} \left| \sum_{\substack{b \in B \\ s \in F_* \\ s\neq 0}} e(s(\underline{a} \cdot \underline{b} - \underline{a} \cdot \underline{d} -\underline{b} \cdot \underline{d} + \underline{d} \cdot \underline{d}))\right|^2.\]
Since $B \subseteq \mathcal{P}$, this is:
\[\leq |A||B| \sum_{a \in A} \sum_{ \underline{d} \in F_*^{n-1}} \left|  \sum_{\substack{b \in B \\ s \in F_* \\ s\neq 0}} e(s(\underline{a} \cdot \underline{b} - \underline{a} \cdot \underline{d} -\underline{b} \cdot \underline{d} + \underline{d} \cdot \underline{d}))\right|^2.\]

For any $\underline{a} \in F_*^{n-1}$, we define the quantity $M(\underline{a})$ (with respect to $B$) as in \cite{IK}:
\[M(\underline{a}) := \sum_{ \underline{d} \in F_*^{n-1}} \left| \sum_{\substack{b \in B \\ s \in F_* \\ s\neq 0}} e(s(\underline{a} \cdot \underline{b} - \underline{a} \cdot \underline{d} -\underline{b} \cdot \underline{d} + \underline{d} \cdot \underline{d}))\right|^2.\]

In \cite{IK}, they prove for even $n \geq 4$ that\footnote{Actually, they state this for $\underline{a}$ such that $a \in B$ (since $A = B$ in their case), but their proof never uses that $a \in B$, so it extends without modification to all $\underline{a}$'s.}
\[M(\underline{a}) \ll |F|^{\frac{n+2}{2}} |B|^2 + |F|^{n} |B|\]
holds for all $\underline{a} \in F_*^{n-1}$.

This gives us:
\[|A||B| \sum_{a \in A} M(\underline{a}) \ll |A|^2|B| \left(|F|^{\frac{n+2}{2}} |B|^2 + |F|^n |B|\right).\]

By substituting this into our bounds above, we have that:
\[\sum_{\substack{a+b-d \in \mathcal{P} \\ a \in A \\ b,d \in B}} 1 \ll |F|^{-1}|A||B|^2 +|F|^{-1} \left(|A|^2|B| \left(|F|^{\frac{n+2}{2}} |B|^2 + |F|^n |B|\right)\right)^{1/2},\]
which is
\[ \ll |F|^{-1} |A||B|^2 + |F|^{-1} |F|^{\frac{n+2}{4}} |A| |B|^{3/2} + |F|^{-1} |F|^{\frac{n}{2}} |A||B| \]\[= |F|^{-1} |A||B|^2 + F^{\frac{n-2}{4}} |A||B|^{3/2} + |F|^{\frac{n-2}{2}} |A||B|.\]

For odd $n$ when $|F| = q^m$ for some prime $q$ congruent to 3 modulo 4 with $m(n-1)$ not a multiple of 4, they prove in \cite{IK} that
\footnote{Again, they state this for $\underline{a}$ such that $a \in B$ (since $A = B$ in their case), but their proof never uses that $a \in B$, so it extends without modification to all $\underline{a}$'s.}
\[M(\underline{a}) \ll |F|^{n} |B| + |F|^{\frac{n+1}{2}} |B|^2\]
holds for all $\underline{a} \in F_*^{n-1}$.

This gives us:
\[|A||B| \sum_{a \in A} M(\underline{a}) \ll |A|^2|B| \left(|F|^{n} |B| + |F|^{\frac{n+1}{2}} |B|^2\right).\]

By substituting this into our bounds above, we have that:
\[\sum_{\substack{a+b-d \in \mathcal{P} \\ a \in A \\ b,d \in B}} 1 \ll |F|^{-1}|A||B|^2 +|F|^{-1} |A||B|^{1/2}\left(|F|^{n} |B| + |F|^{\frac{n+1}{2}} |B|^2 \right)^{1/2},\]
which is
\[ \ll |F|^{-1} |A||B|^2 + |F|^{\frac{n-2}{2}} |A| |B| + |F|^{\frac{n-3}{4}} |A||B|^{3/2}.\]
We note that this is actually a somewhat better estimate than the lemma requires, since $|F|^{\frac{n-3}{4}} < |F|^{\frac{n-2}{4}}$.
\end{proof}

\subsection{Proof of Theorem 2}

We recall that $p := \frac{4n}{3n-2}$. We now prove Theorem 2, restated below in an equivalent formulation:

\begin{thm2} When $n \geq 4$ is even or when $n$ is odd and $|F| = q^m$ for a prime $q$ congruent to 3 modulo 4 such that $m(n-1)$ is not a multiple of 4, for every function $f: \mathcal{P} \rightarrow \mathbb{C}$ we have that:
\[||(f d\sigma)^\vee||_{L^4(F^n,dx)} \ll ||f||_{L^p(\mathcal{P},d\sigma)}.\]
\end{thm2}

\begin{proof}

Expanding the $L^4$ norm, we see that our task reduces to proving:

 \[||(f d\sigma)^\vee ||_{L^4(F^n, dx)}=|F|^{1-3n/4} \left(  \sum_{\substack{a+b = c+d \\ a,b,c,d \in \mathcal{P}}} f(a)f(b)\overline{f(c)}\overline{f(d)} \right)^{1/4}\]
\begin{equation}\label{eq:combinequality}
\leq C  |F|^{5/4 -1/2n-3n/4 }  \left(  \sum_{\xi \in \mathcal{P}} |f(\xi)|^{\frac{4n}{3n-2}} \right)^{\frac{3n-2}{4n}}=||f||_{L^{\frac{4n}{3n-2}}(\mathcal{P},d\sigma)}.
\end{equation}

We will find it more convenient to prove the equivalent formulation:

\[ |F|^{3n/2-2} || (fd\sigma)^{\vee} (fd\sigma)^{\vee} ||_{L^2(F^n,dx)} = \left(  \sum_{\substack{a+b = c+d \\ a,b,c,d \in \mathcal{P}}} f(a)f(b)\overline{f(c)}\overline{f(d)} \right)^{1/2}\]
 \[\ll |F|^{1/2-1/n} \left(  \sum_{\xi \in \mathcal{P}} |f(\xi)|^{\frac{4n}{3n-2}} \right)^{\frac{3n-2}{2n}}.\]

As before, we may assume that $f=\sum_{j=0}^{\infty} 2^{-j}\chi_{E_j}$ is a dyadic step function. Moreover, we will normalize $f$ to have $L^p$ norm $1$ in the counting measure. In other words, $\sum_{j=0}^{\infty} 2^{-pj}|E_j|=1$. It now suffices to prove

\[  |F|^{(3n/2-5/2+1/n)} \sum_{j=0}^{\infty} \sum_{k=0}^{\infty} 2^{-j}2^{-k}||(\chi_{E_j} d\sigma)^\vee (\chi_{E_k} d\sigma)^\vee  ||_{L^2(F^n,dx)} \ll 1.\]

Observe that \[|F|^{3n/2-2}\left|\left|(\chi_A d\sigma)^\vee (\chi_B d\sigma)^\vee \right|\right|_{L^2(F^n,dx)} = (\sum_{\substack{a+b = c+d \\ a,c \in A \\ b,d \in B}} 1)^{1/2}.\]
Hence, we can rewrite the inequality to be proved as:
\[ \sum_{j=0}^{\infty} \sum_{k=0}^{\infty} 2^{-j-k} (\sum_{\substack{a+b = c+d \\ a,c \in E_j \\ b,d \in E_k}} 1)^{1/2} \ll |F|^{1/2 - 1/n}.\]

By the symmetry of $j$ and $k$, it suffices to show
\[\sum_{j=0}^{\infty} \sum_{k=j}^{\infty} 2^{-j-k} (\sum_{\substack{a+b = c+d \\ a,c \in E_j \\ b,d \in E_k}} 1 )^{1/2} \ll |F|^{1/2 - 1/n}.\]

Lemma \ref{lem:setbounds} gives us an upper bound on the quantity $(\sum_{\substack{a+b = c+d \\ a,c \in E_j \\ b,d \in E_k}} 1 )^{1/2}$, but we can also obtain the simpler upper bound of $|E_j||E_k|^{1/2}$ by noting that for fixed values of $a, c \in E_j, b \in E_k$, there is at most one value of $d \in E_k$ which satisfies $a+b = c+d$.

We will split this sum into three pieces according to the following cases: 1. $|E_j| \leq |F|^{\frac{n-2}{2}}$, 2. $|E_k| \leq |F|^{\frac{n-2}{2}}$, and 3.  $|F|^{\frac{n-2}{2}} \leq |E_j|, |E_k| \leq |F|^{n-1}$.

We note that the union of these three cases covers all possibilities for subsets $E_j, E_k \subseteq \mathcal{P}$. We first consider case 1. We let $J$ denote the subset of $j$'s satisfying $|E_j| \leq |F|^{\frac{n-2}{2}}$. We also let $U$ denote the value $\log_2\left(|F|^{\frac{(n-2)(3n-2)}{8n}}\right)$. We note that when $j \leq U$, the bound $|E_j| \leq 2^{pj}$ is better than the bound $|E_j| \leq |F|^{\frac{n-2}{2}}$, and when $j>U$, the latter bound is better.

We consider:
\[\sum_{j \in J} \sum_{k=j}^{\infty} 2^{-j-k} \;(\sum_{\substack{a+b = c+d \\ a,c \in E_j \\ b,d \in E_k}} 1)^{1/2} \leq
\sum_{j \in J} \sum_{k=j}^{\infty} 2^{-j-k} |E_j||E_k|^{1/2}.\]
Here, we have used the simple upper bound of $|E_j||E_k|^{1/2}$ noted above. We can rewrite this as:
\[\sum_{\substack{j \in J \\ j\leq U}} \sum_{k=j}^\infty 2^{-j-k} |E_j||E_k|^{1/2} + \sum_{\substack{j \in J \\ j > U}} \sum_{k=j}^{\infty} 2^{-j-k} |E_j||E_k|^{1/2}.\]

Since we have assumed that $\sum_{j=0}^\infty 2^{-pj}|E_j| = 1$, we always have that $|E_j| \leq 2^{pj}$. Applying this to the first sum, we see that:
\[\sum_{\substack{j \in J \\ j\leq U}} \sum_{k=j}^\infty 2^{-j-k} |E_j||E_k|^{1/2} \ll \sum_{\substack{j \in J \\ j \leq U}}\sum_{k=j}^\infty 2^{-j-k} 2^{pj}2^{pk/2}.\]
Since $p/2 < 1$, the geometric sum over the $k$ values is convergent, and the value of the sum is bounded by a constant (depending on $n$) times its first term. Hence, we have that this is $\ll \sum_{\substack{j \in J \\ j \leq U}} 2^{j(3/2 p -2)}$. The exponent $(3/2 p -2)$ here is equal to $\frac{4}{3n-2} > 0$, so this geometric sum is bounded by a constant (depending on $n$) times its largest term, which is:
\[\ll 2^{U(3/2p - 2)} = |F|^{\frac{(n-2)(3n-2)(3/2 p -2)}{8n}} = |F|^{1/2 - 1/n}.\]

We now consider the sum:
\[\sum_{\substack{j \in J \\ j > U}} \sum_{k=j}^{\infty} 2^{-j-k}|E_j||E_k|^{1/2}.\]
Noting that $|E_j| \leq |F|^{\frac{n-2}{2}}$ and $|E_k| \leq 2^{pk}$, we have:
\[ \ll |F|^{\frac{n-2}{2}} \sum_{\substack{j \in J \\ j > U}} \sum_{k=j}^{\infty} 2^{-j-k} 2^{pk/2}.\]
Again, since $p/2 < 1$, the geometric sum of $k$ is convergent, so $\ll |F|^{\frac{n-2}{2}} \sum_{\substack{j \in J \\ j > U}} 2^{j(p/2 - 2)}.$
The exponent $p/2 -2$ is negative, so this geometric series in $j$ converges, and we have:
\[ \ll |F|^{\frac{n-2}{2}} 2^{U(p/2-2)} = |F|^{\frac{n-2}{2} + \frac{(n-2)(3n-2)(p/2-2)}{8n}} = |F|^{1/2 - 1/n}.\] This concludes our proof for values of $j \in J$.

We note cases 1 and 2 are symmetric, and we are left with considering case 3, where $j,k \notin J$. In this case, we apply the bound provided by Lemma \ref{lem:setbounds} with $A := E_k$ and $B:= E_j$:
\[\sum_{j\notin J} \sum_{\substack{k \notin J \\ k\geq j}} 2^{-j-k} \left(\sum_{\substack{a+b = c+d \\ a,c \in E_j \\ b,d \in E_k}} 1\right)^{1/2}  \ll \sum_{j \notin J} \sum_{\substack{k \notin J \\ k \geq j}} 2^{-j-k}|F|^{(n-2)/8} |E_k|^{1/2} |E_j|^{3/4} \]
\[+ \sum_{j \notin J} \sum_{\substack{k \notin J \\ k \geq j}} 2^{-j-k}|F|^{(n-2)/4}|E_k|^{1/2}|E_j|^{1/2} + \sum_{j \notin J} \sum_{\substack{k \notin J \\ k \geq j}} 2^{-j-k} |F|^{-1/2}|E_k|^{1/2}|E_j|.\]

We observe that
\[|F|^{(n-2)/4} |E_k|^{1/2}|E_j|^{1/2} \leq |F|^{(n-2)/8} |E_k|^{1/2} |E_j|^{3/4}\]
as long as $|F|^{(n-2)/2} \leq |E_j|$, i.e. whenever $j \notin J$. Thus, it suffices to bound the first and third sums. We consider the first sum with the change of variable $j =k- \ell$:
\[\sum_{j \notin J} \sum_{\substack{k \notin J \\ k \geq j}} 2^{-j-k}|F|^{(n-2)/8} |E_k|^{1/2} |E_j|^{3/4} \leq |F|^{(n-2)/8} \sum_{\ell = 0}^{\infty} 2^{\ell} \sum_{\substack{k \geq \ell \\ k \notin J}} 2^{-2k} |E_k|^{1/2} |E_{k-\ell}|^{3/4}.\]
We define the values $0\leq c_k \leq 1$ by $|E_k| = c_k 2^{kp}$. We recall that $\sum_{k=0}^{\infty} 2^{-kp}|E_k| = 1$ implies that the sum $\sum_{k=0}^{\infty} c_k$ converges (in particular, it is equal to 1). We can rewrite the quantity above as:
\[ = |F|^{(n-2)/8} \sum_{\ell = 0}^{\infty} 2^{\ell (1-3/4p)} \sum_{\substack{k \geq \ell \\ k \notin J}} c_k^{1/2}\; c_{k-\ell}^{3/4}\; 2^{k(5/4p-2)}.\]

For $k \notin J$, we have that $|F|^{\frac{n-2}{2}} \leq |E_k| \leq 2^{pk}$, so $2^k \geq |F|^{\frac{n-2}{2p}}$. We note that $5/4p-2 = \frac{4-n}{3n-2} \leq 0$, so
\[2^{k(5/4p-2)} \leq |F|^{\frac{(n-2)(5/4p-2)}{2p}} = |F|^{\frac{(n-2)(4-n)}{8n}}.\] We thus have:
\[|F|^{(n-2)/8} \sum_{\ell = 0}^{\infty} 2^{\ell (1-3/4p)} \sum_{\substack{k \geq \ell \\ k \notin J}} c_k^{1/2}\; c_{k-\ell}^{3/4}\; 2^{k(5/4p-2)} \ll |F|^{\frac{n-2}{8} + \frac{(n-2)(4-n)}{8n}}\sum_{\ell = 0}^{\infty} 2^{\ell(1-3/4p)}\sum_{\substack{k \geq \ell \\ k \notin J}} c_k^{1/2}c_{k-\ell}^{3/4}.\]

We note that $\frac{n-2}{8} + \frac{(n-2)(4-n)}{8n} = 1/2 - 1/n$. For each fixed $\ell$, we apply the Cauchy-Schwarz inequality to the inner sum over $k$ to obtain:
\[ \ll |F|^{1/2-1/n} \sum_{\ell=0}^\infty 2^{\ell(1-3/4p)} \left( \sum_{k=\ell}^{\infty} c_k\right)^{1/2} \left(\sum_{k = \ell}^\infty c_{k-\ell}^{3/2}\right)^{1/2}.\]
Since $0 \leq c_k \leq 1$, we have that $c_{k-\ell} ^{3/2} \leq c_{k-\ell}$ for all $k,\ell$, so $\sum_{k=0}^{\infty} c_k = 1$ implies this is:
\[ \ll |F|^{1/2-1/n} \sum_{\ell = 0}^{\infty} 2^{\ell(1-3/4p)}.\]
Now, $1-3/4p = \frac{-2}{3n-2} < 0$, so this geometric sum over $\ell$ converges, and we obtain $\ll |F|^{1/2-1/n}$, as desired.

We now consider the third sum, $\sum_{j \notin J} \sum_{\substack{k \notin J \\ k \geq j}} 2^{-j-k} |F|^{-1/2}|E_k|^{1/2}|E_j|$. We define the value $U$ to be: $U:= \log_2\left(|F|^{(n-1)/p}\right)$. We note that when $j \leq U$, the bound $|E_j|\leq 2^{pj}$ is better than the bound $|E_j| \leq |F|^{n-1}$, and when $j > U$, the latter bound is better.

Our sum is then:
\[ \ll |F|^{-1/2} \sum_{j\leq U} \sum_{k \geq j} 2^{-k-j} |E_j||E_k|^{1/2} + |F|^{-1/2} \sum_{j \geq U} \sum_{k \geq j}2^{-k-j} |E_j||E_k|^{1/2}.\]
For the first of these two sums, we use $|E_j| \leq 2^{pj}$ and $|E_k| \leq 2^{pk}$:
\[|F|^{-1/2} \sum_{j\leq U} \sum_{k \geq j} 2^{-k-j} |E_j||E_k|^{1/2} \ll |F|^{-1/2} \sum_{j \leq U} \sum_{k \geq j} 2^{j(p-1)}2^{k(p/2-1)}.\] Since $p/2 < 1$, the geometric sum over $k$ is convergent, and we get $\ll |F|^{-1/2} \sum_{j\leq U} 2^{j(3/2p-2)}$.
Now, $3/2p -2  = \frac{4}{3n-2}>0$, so this is:
\[\ll |F|^{-1/2} 2^{U(3/2p-2)} = |F|^{-1/2} |F|^{\frac{(n-1)(3/2p-2)}{p}} = |F|^{1/2-1/n}.\]

To bound the sum for values of $j > U$, we use that $|E_j| \leq |F|^{n-1} = |\mathcal{P}|$ and $|E_k| \leq 2^{kp}$:
\[|F|^{-1/2} \sum_{j \geq U} \sum_{k \geq j} 2^{-k-j}|E_j||E_k|^{1/2} \ll |F|^{-1/2 + n-1} \sum_{j \geq U} \sum_{k \geq j} 2^{-j}2^{k(p/2-1)}.\]
The geometric sum over $k$ is convergent, so we have $\ll |F|^{-1/2 + n-1} \sum_{j \geq U} 2^{j(p/2-2)}.$ The geometric sum over $j$ is now also convergent, so we have:
\[\ll |F|^{-1/2 + n-1}|F|^{\frac{(n-1)(p/2-2)}{p}} = |F|^{1/2-1/n}.\]
This concludes the proof of the theorem.
\end{proof}

\section{Acknowledgments} We would like to thank Jeff Vaaler for helpful discussions.

\texttt{A. Lewko, Department of Computer Science, The University of Texas at Austin}

\textit{alewko@cs.utexas.edu}
\vspace*{0.5cm}

\texttt{M. Lewko, Department of Mathematics, The University of Texas at Austin}

\textit{mlewko@math.utexas.edu}


\begin{thebibliography}{5}

\bibitem{BCGW}
A. Carbery, Harmonic Analysis on Vector Spaces over Finite Fields, \url{http://www.maths.ed.ac.uk/uploads/assets/7_fflpublic.pdf}

\bibitem{IK}
A. Iosevich and D. Koh, Extension theorems for paraboloids in the finite field setting. Mathematische Zeitschrift, 266, (2010) no.2 471-487.

\bibitem{IKs}
A. Iosevich and D. Koh, Extension theorems for spheres in the finite field setting. Forum Math. 22 (2010), no. 3, 457-483.

\bibitem{IKq}
A. Iosevich and D. Koh, Extension theorems for the Fourier transform associated with non-degenerate quadratic surfaces in vector spaces over finite fields. Illinois Mathematics Journal, 52, no. 2 (2009), 611-628.

\bibitem{MT}
G. Mockenhaupt and T. Tao, Restriction and Kakeya phenomena for finite fields. Duke Math. J. 121 (2004), no. 1, 35-74.

\bibitem{T}
T. Tao, Recent Progress on the Restriction Conjecture. Fourier analysis and convexity, 217-243, Appl. Numer. Harmon. Anal., Birkhuser Boston, Boston, MA, 2004.
\end{thebibliography}
\end{document}